\numberwithin{equation}{section}
\numberwithin{table}{section}
\theoremstyle{plain}
\newtheorem{theorem}{Theorem}[section]
\newtheorem{lemma}{Lemma}[section]
\theoremstyle{remark}
\newtheorem*{remarks}{Remarks}
\newcommand{\ZZZ}{{\mathbb{Z}}}
\newcommand{\ZZ}{{\mathbf{Z}}}
\newcommand{\0}{{\textcolor{white}{000}}}
\DeclareMathOperator{\coker}{coker}
\DeclareMathOperator{\im}{Im}
\DeclareMathOperator{\K}{\mathcal{K}}
\DeclareMathOperator{\srg}{srg}
\title[Invariants of the Rook's Graph]{The Smith and Critical Groups of the Square Rook's Graph and its Complement}
\author[Ducey]{Joshua E. Ducey} 
\author[Gerhard]{Jonathan Gerhard}
\author[Watson]{Noah Watson}
\address{Dept.\ of Mathematics and Statistics, James Madison University, Harrisonburg, VA 22807}
\email{duceyje@jmu.edu}
\email{gerha2jm@dukes.jmu.edu}
\email{watsonnj@dukes.jmu.edu}
\keywords{invariant factors, elementary divisors, Smith normal form, Smith group, critical group, Jacobian group, sandpile group, adjacency matrix, Cartesian product of graphs, Laplacian, chip-firing, line graph, rook's graph}
\subjclass[2010]{05C50}
\begin{document}
\begin{abstract}
Let $R_{n}$ denote the graph with vertex set consisting of the squares of an $n \times n$ grid, with two squares of the grid adjacent when they lie in the same row or column.  This is the square rook's graph, and can also be thought of as the Cartesian product of two complete graphs of order $n$, or the line graph of the complete bipartite graph $K_{n,n}$.  In this paper we compute the Smith group and critical group of the graph $R_{n}$ and its complement.  This is equivalent to determining the Smith normal form of both the adjacency and Laplacian matrix of each of these graphs.  In doing so we verify a 1986 conjecture of Rushanan.

\end{abstract}
\maketitle
\section{Introduction}
This paper concerns integer invariants of the square rook's graph, denoted $R_{n}$.  Such a graph is formed by taking as its vertex set the squares of an $n \times n$ grid, and defining two squares to be adjacent when they lie in the same row or the same column.  That is, if one decided to play chess on such a grid, two squares are adjacent when a rook can move from one to the other.  It is easy to see that $R_{n}$ is isomorphic to the Cartesian product of two complete graphs of order $n$, and also to the line graph of the complete bipartite graph $K_{n,n}$.  

To avoid trivialities, we let $n \geq 3$ throughout.  The graph $R_{n}$ is a \textit{strongly regular graph} (srg) with parameters 
\begin{align*}
v &= n^2 \\
k &= 2(n-1)\\
\lambda &= n-2\\
\mu &= 2
\end{align*}
and is in fact determined up to isomorphism by these parameters, except in the case $n=4$.  See \cite{b-h, moon, shrikhande} for more information and many interesting properties.

When considering a graph $\Gamma$, a standard method of study is to encode its information into a matrix and then work to understand algebraic properties of the matrix.  This can sometimes yield useful information about the graph, and is one of the basic ideas of spectral graph theory.  Another graph invariant, somewhat more subtle than the spectrum, is an abelian group that can be read off from the Smith normal form of such a matrix.  We defer formal definitions until the next section.  

From the adjacency matrix we get the \textit{Smith group} $S(\Gamma)$ and from the Laplacian matrix we get the \textit{critical group} $\K(\Gamma)$.  Our main theorem will be the determination of these groups for the rook's graph $R_{n}$ and its complement $R_{n}^{c}$.   We will write $\ZZ_{r}$ for the group $\ZZZ / r \ZZZ$.

\begin{theorem} \label{thm:main}
Let $R_{n}$ denote the square rook's graph.  The critical group and Smith group of $R_{n}$ and its complement $R_{n}^{c}$ are given by the following isomorphisms:
\begin{align}
\K(R_{n}) &\cong \left(\ZZ_{2n}\right)^{(n-2)^2 + 1} \oplus \left(\ZZ_{2n^{2}}\right)^{2(n-2)} \label{iso:KR}\\
S(R_{n}) &\cong \left(\ZZ_{2}\right)^{(n-2)^{2}} \oplus \left(\ZZ_{2(n-2)}\right)^{2n-3} \oplus \ZZ_{2(n-1)(n-2)} \label{iso:SR}\\
\K(R_{n}^{c}) &\cong \left(\ZZ_{n(n-2)}\right)^{(n-2)^{2}-1} \oplus \left(\ZZ_{n(n-1)(n-2)}\right)^{2} \oplus \left(\ZZ_{n^{2}(n-1)(n-2)}\right)^{2(n-2)} \label{iso:KRc}\\
S(R_{n}^{c}) &\cong \left(\ZZ_{(n-1)}\right)^{2(n-1)} \oplus \ZZ_{(n-1)^{2}} \label{iso:SRc}.
\end{align}
\end{theorem}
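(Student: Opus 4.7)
My plan is to compute each Smith normal form one prime at a time. The spectrum of $R_n = K_n\square K_n$ is well known---adjacency eigenvalues $2(n-1),\,n-2,\,-2$ with multiplicities $1,\,2(n-1),\,(n-1)^{2}$, and Laplacian eigenvalues $0,\,n,\,2n$ with the same multiplicities---so the product of the invariant factors of each matrix is already determined by the determinant. Only the primes dividing $2n(n-1)(n-2)$ require genuine analysis; at each such prime $p$ I would determine the $p$-part of the Smith normal form by computing $\mathbb{F}_{p}$-ranks of the relevant matrix and its reductions, and, where needed, lifting kernel elements to $\mathbb{Z}_{p}$ to read off the higher elementary divisors.

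The main tool I would exploit is the identification $R_n = L(K_{n,n})$. Letting $B$ denote the $2n \times n^{2}$ unsigned vertex-edge incidence matrix of $K_{n,n}$, one has
\begin{equation*}
A(R_n) + 2I \;=\; B^{T}B, \qquad L(R_n) \;=\; 2nI - B^{T}B.
\end{equation*}
The Smith normal form of $B$ is transparent since $B$ is a standard bipartite incidence matrix. Combining this factorization with the strongly regular graph identity
\begin{equation*}
A(R_n)^{2} - (n-4)A(R_n) - 2(n-2)I \;=\; 2J
\end{equation*}
gives enough relations to pin down the image and kernel of $A(R_n)$ and $L(R_n)$ modulo each prime power. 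For the complement $R_n^{c}$, I would use $A(R_n^{c}) = J - I - A(R_n)$ and $L(R_n^{c}) = n^{2}I - J - L(R_n)$, together with the SRG identity for $R_n^{c}$; since $A(R_n)$, $A(R_n^{c})$, and $J$ share a common eigenspace decomposition (the span of $\mathbf{1}$, the sum of the row-block and column-block eigenspaces, and their orthogonal complement), most of the prime-by-prime analysis transfers directly from $R_n$ to its complement.

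The principal obstacle will be the prime $p=2$, which pervades both the SRG parameters and the claimed group orders. Modulo $2$ the SRG identity reduces to $A^{2} \equiv nA$, which is well-behaved when $n$ is odd (so $A$ is idempotent mod $2$) but degenerates to nilpotence when $n$ is even, eliminating the eigenspace splitting that was available at odd primes. Distinguishing the $\ZZ_{2n}$ summands from the $\ZZ_{2n^{2}}$ summands of $\K(R_n)$, and the $\ZZ_{2(n-2)}$ summands from the $\ZZ_{2(n-1)(n-2)}$ summand of $S(R_n)$, will require lifting kernel bases from $\mathbb{F}_{2}$ to $\ZZZ/4,\,\ZZZ/8,\ldots$ and tracking at which stage each generator leaves the successive image. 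The factorization $A(R_n)+2I = B^{T}B$ should be the essential lever here, since it replaces the delicate 2-adic analysis of $A(R_n)$ with the much simpler 2-adic structure of the incidence matrix $B$.
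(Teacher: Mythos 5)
Your plan is mathematically coherent and all the identities you invoke check out: $A(R_n)+2I=B^{T}B$ for the incidence matrix $B$ of $K_{n,n}$, the relation $A^{2}-(n-4)A-2(n-2)I=2J$, the complementation formulas, and the mod-$2$ dichotomy between $A^{2}\equiv A$ ($n$ odd) and $A^{2}\equiv 0$ ($n$ even). But it is a genuinely different route from the paper's. The paper computes the group orders from the spectrum exactly as you do, and then avoids $p$-ranks, lifting, and the incidence factorization altogether: it exhibits five explicit configurations on the $n\times n$ grid, bounds the order of each by an explicit chip-firing sequence, shows by explicit linear combinations that these configurations (together with their images under row/column permutations and the diagonal reflection) generate the cokernel, and closes with the elementary observation (Lemma~\ref{lem:base}) that a finite abelian group generated by $x_{1},\dots,x_{k}$ with $|x_{i}|$ dividing $r_{i}$ and $|G|=r_{1}\cdots r_{k}$ must be $\ZZ_{r_{1}}\oplus\cdots\oplus\ZZ_{r_{k}}$. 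That argument treats all four groups uniformly and never requires a prime-by-prime case analysis; your approach is the classical module-theoretic one and would give finer local information along the way, but it concentrates all the difficulty at $p=2$ (the only prime that can divide two of $n$, $n-1$, $n-2$) exactly as you anticipate. Two cautions if you execute it: first, the Smith normal form of $B$ does not by itself determine that of $B^{T}B$ --- relating $\coker(B^{T}B)$ to $\coker(B)$ and $\coker(B^{T})$ requires a separate argument, and this is precisely the delicate point in the line-graph critical group literature --- and second, for an odd prime $p$ dividing $n-2$ to a power $a\geq 2$, the $p$-rank together with the determinant does not pin down the elementary divisors, so the lifting step you mention is essential rather than optional. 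As written, your proposal is a sound program but the bulk of the proof (the actual rank computations and liftings) remains to be carried out.
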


\begin{remarks} \hfil 
\begin{enumerate}
\item In \cite[Example SNF2]{b-ve}, the authors state isomorphism \ref{iso:SR}, and more generally give a diagonal form for the matrix $A+(c+2)I$, where $A$ is the adjacency matrix of $R_{n}$.  They then refer the reader to van Eijl's thesis \cite{ve} for the ``somewhat boring proof''.  In \cite{berget} the author computes the critical group of the line graph of the complete bipartite graph $K_{n,m}$ (i.e., the $n \times m$ rook's graph) via integral row and column operations on the Laplacian.  Thus the isomorphisms \ref{iso:KR} and \ref{iso:SR} can be deduced from these works.  We believe the isomorphisms \ref{iso:KRc} and \ref{iso:SRc} are new.
\item  The two isomorphisms above for $S(R_{n})$ and $S(R_{n}^{c})$ appeared as conjectures in \cite[Examples 4-5 and 4-6]{rushanan}.
\end{enumerate}
\end{remarks}

Computation of the Smith normal form of the adjacency or Laplacian matrix is a standard technique to determine the Smith or critical group of a graph.  It is well known that this can be achieved through integral row and column operations, and this approach is popular among many authors.  As mentioned in the first remark above, this is how the rook's graphs' invariants have thus far been approached.  %We mention that there are also $p$-local methods that focus on one prime at a time, in effect computing the Sylow $p$-subgroups of the Smith or critical group for all, or for interesting, primes $p$ [REFERENCES]. 

Our proof is quite different, and handles all of these isomorphisms uniformly.  We will work directly within the particular abelian group to identify a cyclic decomposition for it.  Visualizing the group elements in terms of configurations on the graphs will play an important role, and will nicely illuminate the relationship between the graphs and the groups.  We note that the most natural decomposition is not always the invariant factor decomposition that arises from Smith normal form.  See section \ref{sec:KRC} below, and see \cite{wilson} for a rather spectacular example.  For surveys on Smith groups see \cite{rushanan, sin}.  For a good starting point for critical groups, see \cite{dino2, dino1}.

\section{Preliminaries}
Let $M$ be an $m \times n$ matrix with integer entries.  We may view the matrix as defining a homomorphism of free abelian groups:
\[
M \colon \ZZZ^{n} \to \ZZZ^{m}.
\]
It is the cokernel of this map, $\ZZZ^{m} / \im(M)$, that we are interested in.  This finitely generated abelian group becomes a graph invariant when we take the matrix $M$ to be the adjacency or Laplacian matrix of the graph.

Formally, let $\Gamma$ be a simple graph and order the vertices in some arbitrary but fixed manner.  Let $A = (a_{i,j})$ be a matrix with rows and columns indexed by the vertex set of $\Gamma$.  Set
\[
a_{i,j} = 
	\begin{cases}
	1, & \mbox{if vertex $i$ and vertex $j$ are adjacent}\\
	0, & \mbox{otherwise}.
	\end{cases}
\]
Then $A$ is the \textit{adjacency matrix} of $\Gamma$.  Define a matrix $D = (d_{i,j})$ of the same size as $A$ by 
\[
d_{i,j} = 
	\begin{cases}
	\mbox{the degree of vertex $i$}, & \mbox{if $i = j$}\\
	0, & \mbox{otherwise},
	\end{cases}
\]
and set $L = D - A$.  The matrix $L$ is the \textit{Laplacian matrix} of $\Gamma$.  The adjacency matrix and Laplacian matrix will be our primary focus.  The cokernel of $A$ is known as the Smith group of the graph and is denoted $S(\Gamma)$. The cokernel of $L$ always has free rank equal to the number of connected components of the graph.  The torsion subgroup of the cokernel of $L$ is known as the critical group (or sandpile group, Jacobian group, Picard group) of $\Gamma$ and is denoted $\K(\Gamma)$.  The critical group is especially interesting; for connected graphs, its order is equal to the number of spanning trees of the graph.

One way to compute the cokernel of $M$ is by using the \textit{Smith normal form}.  It was H.J.S. Smith \cite{smith2} who first proved that there exist square, unimodular (i.e. determinant $\pm 1$) integer matrices $P$ and $Q$ so that $PMQ = S$, where the matrix $S=(s_{i,j})$ satisfies:
\begin{enumerate}
\item $s_{i,i}$ divides $s_{i+1,i+1}$ for $1 \leq i < \min \{m,n\}$
\item $s_{i,j} = 0$ for $i \neq j$.
\end{enumerate}
We then have
\[
\coker M \cong \ZZ_{s_{1,1}} \oplus \ZZ_{s_{2,2}} \oplus \cdots
\]
and this is the so-called invariant factor decomposition of the abelian group.  For this reason the integers $s_{i,i}$ are known as the \textit{invariant factors} of $M$; their prime power factors are the \textit{elementary divisors} of $M$.  Any diagonal form of $M$ achieved by unimodular matrices will give a cyclic decomposition of $\coker M$, and serves to identify the group.

We will use the following lemma repeatedly.
\begin{lemma}\label{lem:base}
Let $G$ be a finite abelian group, generated by the elements $x_1, x_2, \ldots, x_k$.  Suppose that there exist integers $r_1, r_2, \ldots , r_k$ so that $|G| = r_1 \cdot r_2  \cdots r_k$ and $|x_{i}|$ divides $r_{i}$, for $1 \leq i \leq k$.  Then
\[
G \cong \ZZ_{r_{1}} \oplus \ZZ_{r_{2}} \oplus \cdots \oplus \ZZ_{r_{k}}.
\]
\end{lemma}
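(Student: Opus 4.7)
The plan is to exhibit a surjection from $H = \ZZ_{r_{1}} \oplus \cdots \oplus \ZZ_{r_{k}}$ onto $G$ and then upgrade it to an isomorphism by a simple order count.

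First I would define $\varphi \colon \ZZZ^{k} \to G$ on the standard basis by $e_{i} \mapsto x_{i}$ and extend $\ZZZ$-linearly. Because $x_{1}, \ldots, x_{k}$ generate $G$, this map is surjective. Next, since $|x_{i}|$ divides $r_{i}$, we have $r_{i} x_{i} = 0$ in $G$, so the subgroup $r_{1}\ZZZ \oplus \cdots \oplus r_{k}\ZZZ$ of $\ZZZ^{k}$ lies in $\ker \varphi$. Hence $\varphi$ descends to a surjective homomorphism
\[
\bar\varphi \colon H \twoheadrightarrow G.
\]
Finally, $|H| = r_{1} \cdots r_{k} = |G|$, so the surjection $\bar\varphi$ between finite groups of equal order must be injective as well, and is therefore an isomorphism.

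There is no genuine obstacle to surmount: the lemma is really just a clean packaging of the principle that a surjection between finite sets of equal cardinality is a bijection. The content lies in how the lemma will later be applied. To identify one of the Smith or critical groups in Theorem \ref{thm:main}, it will suffice to (i) determine the total order of the group (which can often be read from the spectrum of $A$, or from Kirchhoff's formula in the case of $\K(\Gamma)$), and (ii) exhibit a generating set whose element orders multiply to this known total. This bypasses any explicit Smith normal form reduction and lets the authors work directly inside the cokernel.
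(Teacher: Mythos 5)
Your proof is correct and follows essentially the same route as the paper: both construct the surjection $\ZZ_{r_{1}} \oplus \cdots \oplus \ZZ_{r_{k}} \to G$ determined by $1 \mapsto x_{i}$ (you via quotienting $\ZZZ^{k}$, the paper via the universal property of the direct sum, which yields the identical map) and then conclude by comparing orders. No gaps.
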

\begin{proof}
Since the order of each $x_{i}$ divides $r_{i}$, there is a homomorphism 
\[
\ZZ_{r_{i}} \to G
\]
that sends $1 \mapsto x_{i}$.  These homomorphisms extend to a unique map
\[
\ZZ_{r_{1}} \oplus \ZZ_{r_{2}} \oplus \cdots \oplus \ZZ_{r_{k}} \to G.
\]
Since the set $\{x_i\}$ generates $G$, this map is onto, and since both groups have the same order, it must be an isomorphism.
\end{proof}

\section{The Rook's Graph and Its Complement}

To prove Theorem \ref{thm:main} we will apply Lemma \ref{lem:base} to each of the four abelian groups.  There are three ingredients needed to apply the lemma:  we need to know the orders of our groups, exhibit a set of elements and show that their orders divide the orders of the cyclic factors as stated in the theorem, and show these elements do indeed generate the group.  We will compute orders of the groups immediately below.  The two subsections following will provide the arguments for the orders of the said generators, as well as the proof that they do indeed form a generating set.

As the rook's graph $R_{n}$ is an $\srg(n^{2}, 2(n-1), n-2, 2)$, we immediately get its adjacency spectrum \cite[Chapter 9]{b-h}:
\[
[-2]^{(n-1)^{2}}, [n-2]^{2n-2}, [2(n-1)]^{1}.
\]
The exponents here indicate multiplicity of the eigenvalue.  Since the adjacency matrix of $R_{n}$ is nonsingular, the product of the eigenvalues equals the product of the invariant factors.  We deduce:
\begin{align*}
|S(R_{n})| &= 2^{(n-1)^{2}} \cdot (n-2)^{2n-2} \cdot 2(n-1) \\
&= 2^{(n-2)^{2}} \cdot (2(n-2))^{2n-3} \cdot 2(n-1)(n-2).
\end{align*}

Since $R_{n}$ is regular of degree $2(n-1)$, the Laplacian spectrum can be immediately deduced from the adjacency spectrum.  We get:
\[
[2n]^{(n-1)^{2}}, [n]^{2n-2}, [0]^{1}.
\]
Kirchhoff's Matrix-Tree Theorem \cite[Prop. 1.3.4]{b-h} implies that the order of the critical group of $R_{n}$ is the product of the nonzero eigenvalues of the Laplacian, divided by the number of vertices.  We get:
\begin{align*}
|\K(R_{n})| &= \frac{1}{n^{2}} \cdot (2n)^{(n-1)^{2}} \cdot n^{2n-2} \\
&= (2n)^{(n-2)^{2}+1} \cdot (2n^{2})^{2(n-2)}.
\end{align*}

The complement graph $R_{n}^{c}$ is also strongly regular, with parameters $\srg(n^{2}, (n-1)^{2}, (n-2)^{2}, (n-1)(n-2))$.  The exact same calculation as above can be repeated to compute the group orders $|S(R_{n}^{c})|$ and $|\K(R_{n}^{c})|$, and one sees that in the statement of Theorem \ref{thm:main} the orders of all of the groups involved are correct.

\subsection{Order arguments}\ \\

Let $M$ denote either the adjacency or Laplacian matrix of a graph.  Both the domain and codomain of the map defined by $M$ are equal to the free abelian group on the vertex set of the graph.  We visualize the elements of this group as \textit{configurations} on our graph.  Formally, for us a configuration on a graph $\Gamma$ will be a function $c \colon V(\Gamma) \to \ZZZ$.   We suggest that the reader imagine a picture of the graph with integers labeling the vertices.  Addition of configurations becomes vertex-wise addition of the labels.  For the rook's graph, such a visualization is quite easy; simply take an $n \times n$ grid and fill the squares with integers.

Now the cokernel of $M$ is still represented by the set of all configurations, but up to a certain equivalence.  For the Laplacian matrix, this equivalence is given by ``chip-firing'', which we will explain in a moment. First, observe that the set of configurations whose labels sum to zero form the smallest direct summand of the codomain of $L$ that contains the image of $L$. Therefore, the elements of the critical group can always be represented by configurations whose labels sum to zero. (In the Smith group, all configurations are permitted, not just those with labels summing to zero.)

Two configurations represent the same element of $\K(\Gamma)$ if one can be transformed into the other by a finite sequence of operations. The following are the operations we allow:
\begin{enumerate}
\item \textit{fire at a vertex}:  results in the label of that vertex decreasing by its degree, and the labels of its neighbors each increase by 1,
\item \textit{pull at a vertex}:  results in the label of that vertex increasing by its degree, and the labels of its neighbors each decrease by 1.
\end{enumerate}
We will sometimes refer to the labels in the grid as the number of chips on each vertex.  With a little thought the reader should be able to recognize that two configurations are equivalent precisely when their difference is in the image of the Laplacian.  Thus, we are indeed describing the cokernel of $L$. We give an example in Figure~\ref{fig:Equiv} of two equivalent configurations in the critical group of $R_4$. Here and throughout, we use empty space to denote zeros.

\begin{figure}[h]
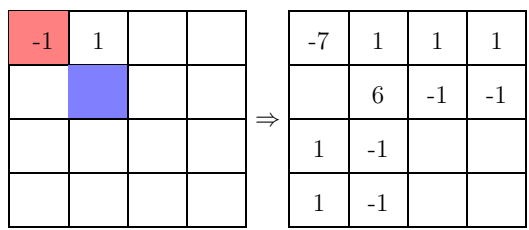

$$
{\renewcommand{\arraystretch}{1.8}
\scalebox{.8}{
\begin{tabular}{|c|c|c|c| }
 \hline	\cellcolor{red!50} -1 & 1 & \0 & \0 \\
 \hline \0 & \cellcolor{blue!50} & \0 & \0\\
 \hline \0 & \0 & \0 & \0\\
 \hline \0 & \0 & \0 & \0\\
 \hline  
\end{tabular}
$\Rightarrow$
\begin{tabular}{|c|c|c|c| }
 \hline	\ -7 \ & 1 & 1 & 1 \\
 \hline \0 & 6 & -1 & -1\\
 \hline 1 & \ -1 \ & \0 & \0\\
 \hline 1 & -1 & \0 & \0\\
 \hline  
\end{tabular}
}}
$$
\caption{Equivalent configurations in $\K(R_4)$, seen by firing at the red vertex and pulling at the blue.}\label{fig:Equiv}
\end{figure}

For the adjacency matrix, we can still play a similar game.  The elements of the Smith group can be viewed as configurations on the graph, up to a similar equivalence by a finite sequence of fires and pulls.  The only difference is that in the Smith group, it is only the neighbors' labels that change, and no change is made at the vertex that was fired or pulled. In Figure~\ref{fig:EquivS}, we give an example of two equivalent configurations in the Smith group of $R_4$.

\begin{figure}[h]
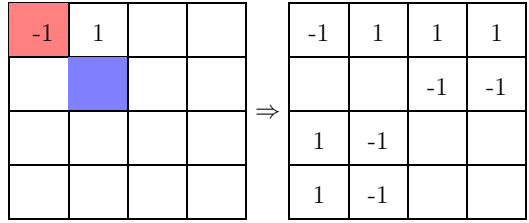

$$
{\renewcommand{\arraystretch}{1.8}
\scalebox{.8}{
\begin{tabular}{|c|c|c|c| }
 \hline	\cellcolor{red!50} -1 & 1 & \0 & \0 \\
 \hline \0 & \cellcolor{blue!50} & \0 & \0\\
 \hline \0 & \0 & \0 & \0\\
 \hline \0 & \0 & \0 & \0\\
 \hline  
\end{tabular}
$\Rightarrow$
\begin{tabular}{|c|c|c|c| }
 \hline	\ -1 \ & 1 & 1 & 1 \\
 \hline \0 & \0 & -1 & -1\\
 \hline 1 & \ -1 \ & \0 & \0\\
 \hline 1 & -1 & \0 & \0\\
 \hline  
\end{tabular}
}}
$$
\caption{Equivalent configurations in $S(R_4)$, seen by firing at the red vertex and pulling at the blue.}\label{fig:EquivS}
\end{figure}
Interestingly, the Smith and critical groups for both the rook's graph and its complement share many of the same generators. Overall, we have five main generators. The cyclic summands in our isomorphisms will turn out to be generated by these configurations and their images under certain automorphisms of our graph.  If $c$ is a configuration on a graph $\Gamma$ and $\sigma$ is an automorphism of $\Gamma$, then we can define a new configuration $\sigma(c)$ by $(\sigma(c))(v) = c(\sigma^{-1}(v))$, for all $v \in V(\Gamma)$.  In other words, the label at vertex $v$ in the configuration $c$ becomes the label at vertex $\sigma(v)$ in the configuration $\sigma(c)$.  For the rook's graph, this is again easy to visualize.  The automorphisms of $R_{n}$ we use are row swaps, column swaps, and the automorphism $\rho \colon V(R_{n}) \to V(R_{n})$ that reflects across the main diagonal, swapping the $i$th row with the $i$th column.  All the above comments apply to $R_{n}^{c}$ as well, since $R_{n}$ and $R_{n}^{c}$ share the same vertex set and have the same automorphisms.

To show that a particular configuration $c$ has order dividing $k$, we will exhibit a sequence of fires and pulls that transform the configuration $k \cdot c$ into the all-zero configuration.  If such a firing sequence exists for the configuration $k \cdot c$, then clearly such a firing sequence exists for the configuration $k \cdot \sigma(c)$; just apply the automorphism $\sigma$ to all vertices in the first firing sequence.  The important point is that when we consider a conjugate pair of configurations $c$ and $\sigma(c)$, their orders will be equal, though this order will depend on which group we are viewing the elements in.

We now define our main generators $c_{1}, c_{2}, c_{3}, c_{4}, c_{5}$.  We remind the reader that our vertex set is the squares of an $n \times n$ grid; we indicate the value of a configuration $c$ on the square in the $i$th row and $j$th column by $c(i,j)$.

\newpage
\begin{align*}
c_{1}(i,j) &= \begin{cases}
	-1, & \mbox{if $(i,j) = (1,1)$}\\
	1, & \mbox{if $(i,j) = (1,2)$}\\
	0, & \mbox{otherwise.}
	\end{cases}\\
c_{2}(i,j) &= \begin{cases}
	-(n-1), & \mbox{if $(i,j) = (1,1)$}\\
	1, & \mbox{if $i = 1$ and $2 \leq j \leq n$}\\
	0, & \mbox{otherwise.}
	\end{cases}\\
c_{3}(i,j) &= \begin{cases}
	-1, & \mbox{if $(i,j) = (1,1)$ or $(2,2)$}\\
	1, & \mbox{if $(i,j) = (1,2)$ or $(2,1)$}\\
	0, & \mbox{otherwise.}
	\end{cases}\\
c_{4}(i,j) &= \begin{cases}
	1, & \mbox{if $(i,j) = (1,1)$}\\
	0, & \mbox{otherwise.}
	\end{cases}\\
c_{5}(i,j) &= \begin{cases}
	-n(n-1)(n-2), & \mbox{if $(i,j) = (1,1)$}\\
	n(n-2), & \mbox{if $j = 1$ and $2 \leq i \leq n$}\\
	0, & \mbox{otherwise.}
	\end{cases}
\end{align*}

We illustrate these five configurations in Figure \ref{fig:maingens}.

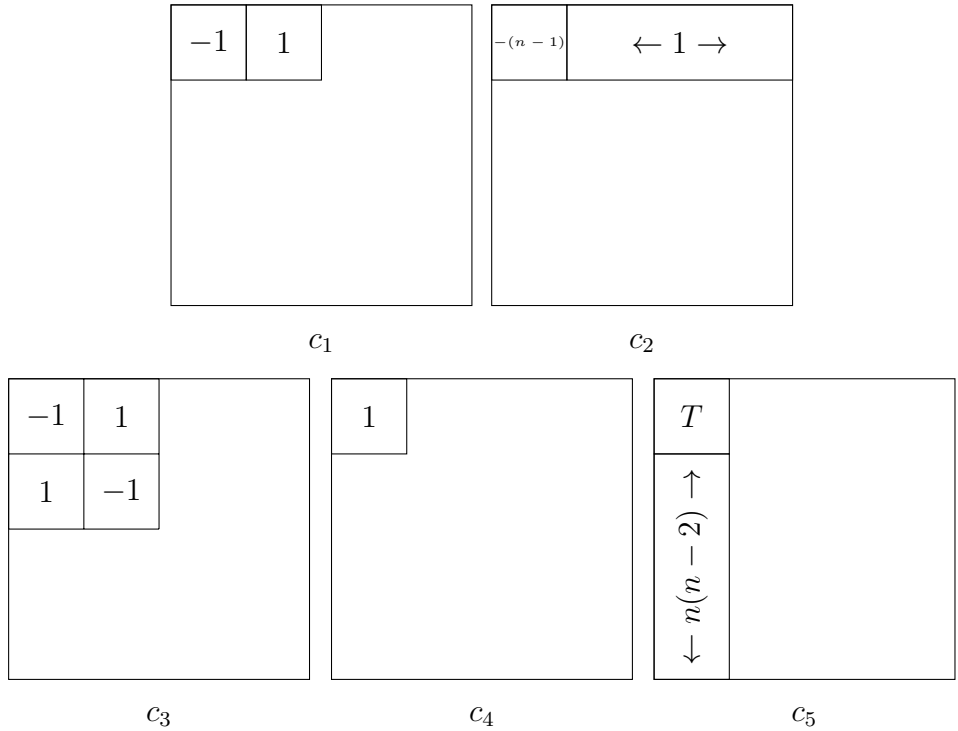
\begin{figure}[H]
$$
\begin{tikzpicture}
\draw[step=0.5cm,color=black] (0,0) rectangle (4,4);
\draw[step=0.5cm,color=black] (0,3) rectangle (1,4);
\draw[step=0.5cm,color=black] (1,3) rectangle (2,4);
\node at (.5,3.5){$-1$};
\node at (1.5,3.5){$1$};
\node at (2,-.5){$c_1$};
\end{tikzpicture}
\
\begin{tikzpicture}
\draw[step=0.5cm,color=black] (0,0) rectangle (4,4);
\draw[step=0.5cm,color=black] (0,3) rectangle (1,4);
\draw[step=0.5cm,color=black] (1,3) rectangle (4,4);
\node at (.5,3.5){{\fontsize{5}{7}$-(n-1)$}};
\node at (2.5,3.5){$\leftarrow 1 \rightarrow$};
\node at (2,-.5){$c_2$};
\end{tikzpicture}
$$
$$
\begin{tikzpicture}
\draw[step=0.5cm,color=black] (0,0) rectangle (4,4);
\draw[step=1cm,color=black] (0,2) grid (2,4);
\node at (.5,3.5){$-1$};
\node at (1.5,3.5){$1$};
\node at (.5,2.5){$1$};
\node at (1.5,2.5){$-1$};
\node at (2,-.5){$c_3$};
\end{tikzpicture}
\ \
\begin{tikzpicture}
\draw[step=0.5cm,color=black] (0,0) rectangle (4,4);
\draw[step=0.5cm,color=black] (0,3) rectangle (1,4);
\node at (.5,3.5){$1$};
\node at (2,-.5){$c_4$};
\end{tikzpicture}
\ \
\begin{tikzpicture}
\draw[step=0.5cm,color=black] (0,0) rectangle (4,4);
\draw[step=0.5cm,color=black] (0,0) rectangle (1,3);
\draw[step=0.5cm,color=black] (0,3) rectangle (1,4);
\node at (.5,3.5){$T$};
\node at (.5,1.5){\begin{sideways} $\leftarrow n(n-2)\rightarrow$ \end{sideways}};
\node at (2,-.5){$c_5$};
\end{tikzpicture}
$$
\caption{The main generators.  Here $T=-n(n-1)(n-2)$.}\label{fig:maingens}
\end{figure}
\newpage

\subsubsection{$\K(R_{n})$}\ \\
 We begin with the critical group of $R_n$, whose claimed decomposition is $$\K(R_n) \cong (\ZZ_{2n})^{(n-2)^2+1} \oplus  (\ZZ_{2n^2})^{2(n-2)}.$$ We will use $c_1, c_2,$ and $c_3$ to form the set of generators of $\K(R_n)$. We will begin by showing the order of $c_1$ must divide $2n^2$, and that the order of $c_2$ and $c_3$ must divide $2n$. Finally, we explain how to get the entire set of generators from the images of these three configurations under certain automorphisms of $R_{n}$.

To prove the orders of each configuration must divide the desired order, we simply give the firing sequence that gets the desired multiple to the all-zero configuration. We will denote the firing sequence of a configuration $c$ by $F(c)$.  Since it does not matter in which order the vertices are fired/pulled, we can exhibit the firing sequence as a function $F(c) \colon V(\Gamma) \to \ZZZ$ where the value on vertex $v$ tells you how many times to fire it if positive, and how many times to pull it if negative.   Again, we will usually visualize the firing sequence as labels on an $n \times n$ grid, though it is important to not confuse a configuration and a firing sequence.  In \cite{biggs}, $F(c)$ is called the representative vector of the firing sequence. For us, this will always be a vector such that $c - LF(c) = \vec{0}$, where $\vec{0}$ is the all-zero configuration.

We will start by formally defining the firing sequence for one configuration, $2n^2 \cdot c_1$, and then explain how to check a vertex's end value after a firing sequence. After doing so, we will simply display each firing sequence as a diagram on the $n \times n$ grid, as described above. We have the following.
$$F(2n^2\cdot c_1) = \begin{cases}
-(n+1) & \text{at vertex $(1,1)$,} \\
n+1 & \text{at vertex $(1,2)$,} \\
-1& \text{at vertices $(i,1)$ for $2 \leq i \leq n$,} \\
1& \text{at vertices $(i,2)$ for $2 \leq i \leq n$,}\\
0 & \text{elsewhere.}
\end{cases}$$

To show this firing sequence does take $2n^2 \cdot c_1$ to the all-zero configuration, we can check each vertex. First we check $(1,1)$. The initial value of this vertex is $-2n^2$, and then it gains $(n+1)(2n-2)$ by being pulled $n+1$ times. It gains $n+1$ chips from $(1,2)$, and loses $n-1$ chips from every vertex below it being pulled once. Thus, the final value on $(1,1)$ is \begin{align*}
 -2n^2 + (n+1)(2n-2) + n+1 - (n-1) &= -2n^2 + 2n^2 - 2n + 2n - 2 + 2 \\
 &= 0.
\end{align*}

Through the same reasoning (but reversing the signs), we find that the value at $(1,2)$ goes to $0$ as well. It is clear that the value at any other vertex in the top row ends at $0$, since it starts at $0$, gains $n+1$ from the firings at $(1,2)$, then loses $n+1$ from the pulls at $(1,1)$. As you can see, we don't actually need to check \textit{every} vertex individually; the computation is greatly reduced by noticing that certain groups of vertices are affected in the same way by the firing sequence. For example, every vertex in the leftmost column (except $(1,1)$) will have the same initial value, and take the same steps to determine its final value. Consider the vertex $(i,1)$, $i > 1$, with initial value $0$. It will gain $2n-2$ when it is pulled once, and lose $n+1$ chips from $(1,1)$.  It will lose $n-2$ from its other neighbors in that same column being pulled. Finally, it will gain one chip from $(i,2)$ being fired. Therefore, the final value on $(i,1)$ is \begin{align*}
 0 + (2n-2) - (n+1) - (n-2) + 1 &= 2n - 2 - n - 1 - n + 2 + 1 \\
 &= 0.
\end{align*}

We can again do the same thing for any vertex $(i,2)$, $i > 1$,  by simply reversing the signs in our previous argument. Finally, any vertex $(i,j)$ with $i >1$ and $j >2$ will gain one chip from $(i,2)$ and lose one from $(i,1)$, resulting in no change from $0$. Therefore, this firing sequence takes $2n^2 \cdot c_1$ to the all-zero configuration, i.e., $2n^{2} \cdot c_{1}$ is equivalent to the identity.

%Let's remove this for now--causing some notational difficulties.
%We outline the general method for checking a firing sequence here:
%\begin{itemize}
%\item Group vertices together by ``equivalent" positions.
%\item If $v_i$ is the initial value of vertex $v$, and $v_f$ is the final value of vertex $v$, then $$v_f = v_i - \deg(v)F(v) + \sum_{w \in N(v)}F(w),$$ where $F(v)$ represents the value of the firing sequence $F$ at $v$ and $N(v)$ is the set of all vertices adjacent to $v$.
%\end{itemize}

From now on, we will simply display the firing sequences as diagrams on the $n \times n$ grid and allow the reader to check that they work.

\begin{figure}[H]
\begin{center}
\begin{tikzpicture}
\draw[step=0.5cm,color=black] (0,0) rectangle (4,4);
\draw[step=0.5cm,color=black] (0,3) rectangle (1,4);
\draw[step=0.5cm,color=black] (1,3) rectangle (2,4);
\draw[step=0.5cm,color=black] (0,0) rectangle (1,3);
\draw[step=0.5cm,color=black] (1,0) rectangle (2,3);
\node at (.43,3.5){{\fontsize{5}{5} $-(n+1)$}};
\node at (1.45,3.5){{\fontsize{7}{8} $(n+1)$}};
\node at (.5,1.5){$-1$};
\node at (.6,2){$\uparrow$};
\node at (.6,1){$\downarrow$};
\node at (1.5,1.5){$1$};
\node at (1.5,2){$\uparrow$};
\node at (1.5,1){$\downarrow$};
\end{tikzpicture}
\caption*{$F(2n^2\cdot c_1)$}
\end{center}
\end{figure}
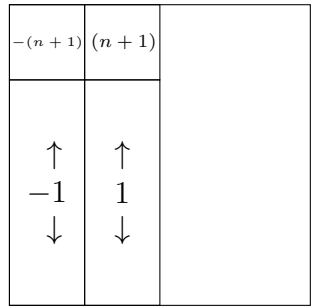
\begin{figure}[h]
\begin{center}
\begin{tikzpicture}
\draw[step=0.5cm,color=black] (0,0) rectangle (4,4);
\draw[step=0.5cm,color=black] (0,0) rectangle (1,3);
\draw[step=0.5cm,color=black] (1,3) rectangle (4,4);
\node at (0.5,3.5){$ -n $};
\node at (2.5,3.5){$\leftarrow 1 \rightarrow$};
\node at (.5,1.5){$-1$};
\node at (.6,2){$\uparrow$};
\node at (.6,1){$\downarrow$};
\end{tikzpicture}
\caption*{$F(2n \cdot c_2)$}
\end{center}
\end{figure}
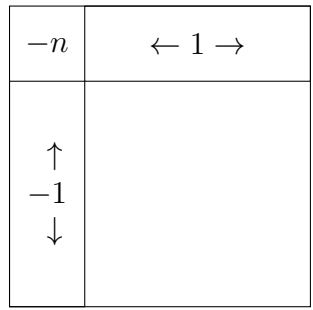
\begin{figure}[H]
\begin{center}
\begin{tikzpicture}
\draw[step=0.5cm,color=black] (0,0) rectangle (4,4);
\draw[step=1cm,color=black] (0,2) grid (2,4);
\node at (.5,3.5){$-1$};
\node at (1.5,2.5){$-1$};
\node at (1.5,3.5){$1$};
\node at (.5,2.5){$1$};
\end{tikzpicture}
\caption*{$F(2n\cdot c_3)$}
\end{center}
\end{figure}

The reader may notice that $F(2n \cdot c_3) = c_3$. This is because $c_3$ is actually an eigenvector of the Laplacian of $R_n$, with eigenvalue $2n$. In fact, $c_3$ will be an eigenvector for the adjacency matrix of $R_n$ as well, and even for the Laplacian and adjacency matrix for $R_n^c$. Thus, $c_3$ will appear as a generator for every group, and its order will be the shifted eigenvalue. We will now demonstrate how we construct all generators of our group using these main generators and certain automorphisms.

The configuration $c_2$ will be a single generator of order $2n$. The other $(n-2)^2$ generators of order $2n$ will come from $c_3$, through row and column swaps. We allow any permutation of the rows that fixes the first and last row, followed by any permutation of the columns that fixes the first and last column.  We can imagine sliding the innermost $-1$ of $c_{3}$ around the inside $(n-2) \times (n-2)$ square, with the ones on the outer row and column following it.

Finally, the $2(n-2)$ generators of order $2n^2$ will be the images of $c_1$ under the automorphisms we now list.  We allow any permutation of the columns that fixes the first and last column.  We also allow the reflection $\rho$, followed by any permutation of the rows that fixes the first and last row. 

For illustration, we list all generators of $\K(R_4)$ in Figure~\ref{fig:genKR4}.
 \begin{figure}[H]
$$
{\renewcommand{\arraystretch}{1.8}
\scalebox{.5}{
\begin{tabular}{|c|c|c|c|}
 \hline	-1 & 1 & \0 & \0 \\
 \hline 1 & -1 & \0 & \0\\
 \hline \0 & \0 & \0 & \0\\
 \hline \0 & \0 & \0 & \0\\
 \hline  
\end{tabular}
\
\begin{tabular}{|c|c|c|c|}
 \hline	-1 & 1 & \0 & \0 \\
 \hline \0 & \0 & \0 & \0\\
 \hline 1 & -1 & \0 & \0\\
 \hline \0 & \0 & \0 & \0\\
 \hline  
\end{tabular}
\
\begin{tabular}{|c|c|c|c|}
 \hline	-1 & \0 & 1 & \0 \\
 \hline \0 & \0 & \0 & \0\\
 \hline 1 & \0 & -1 & \0\\
 \hline \0 & \0 & \0 & \0\\
 \hline  
\end{tabular}
\
\begin{tabular}{|c|c|c|c|}
 \hline	-1 & \0 & 1 & \0 \\
 \hline 1 & \0 & -1 & \0\\
 \hline \0 & \0 & \0 & \0\\
 \hline \0 & \0 & \0 & \0\\
 \hline  
\end{tabular}
\
\begin{tabular}{|c|c|c|c|}
 \hline	-1 & 1 & \0 & \0 \\
 \hline \0 & \0 & \0 & \0\\
 \hline \0 & \0 & \0 & \0\\
 \hline \0 & \0 & \0 & \0\\
 \hline  
\end{tabular}
}}
$$
$$
{\renewcommand{\arraystretch}{1.8}
\scalebox{.5}{
\begin{tabular}{|c|c|c|c|}
 \hline	-1 & \0 & 1 & \0 \\
 \hline \0 & \0 & \0 & \0\\
 \hline \0 & \0 & \0 & \0\\
 \hline \0 & \0 & \0 & \0\\
 \hline  
\end{tabular}
\
\begin{tabular}{|c|c|c|c|}
 \hline	-1 & \0 & \0 & \0 \\
 \hline 1 & \0 & \0 & \0\\
 \hline \0 & \0 & \0 & \0\\
 \hline \0 & \0 & \0 & \0\\
 \hline  
\end{tabular}
\
\begin{tabular}{|c|c|c|c|}
 \hline	-1 & \0 & \0 & \0 \\
 \hline \0 & \0 & \0 & \0\\
 \hline 1 & \0 & \0 & \0\\
 \hline \0 & \0 & \0 & \0\\
 \hline  
\end{tabular}
\
\begin{tabular}{|c|c|c|c|}
 \hline	-3 & 1 & 1 & 1 \\
 \hline \0 & \0 & \0 & \0\\
 \hline \0 & \0 & \0 & \0\\
 \hline \0 & \0 & \0 & \0\\
 \hline  
\end{tabular}
}}
$$
\caption{Generators of $\K(R_4)$.}\label{fig:genKR4}
\end{figure}

\subsubsection{$S(R_{n})$}\ \\
We will now look at the Smith group, whose claimed decomposition is $$S(R_n) \cong (\ZZ_2)^{(n-2)^2} \oplus (\ZZ_{2(n-2)})^{2n-3} \oplus \ZZ_{2(n-1)(n-2)}.$$ For our generators of order $2$, we will use the images of $c_3$ under the same automorphisms as before, giving us $(n-2)^2$ total. For our generators of order $2(n-2)$, we start with $c_1$. The automorphisms we apply to $c_{1}$ here are the same as before, except we no longer require that the column permutations fix the last column (that is, we allow the $1$ to take the last spot in the top row) giving us a total of $2n-3$ such generators. Our sole generator of order $2(n-2)(n-1)$ will be $c_4$.  See Figure~\ref{fig:genSR4} for the list of generators of $S(R_4)$. 

\begin{figure}[H]
$$
{\renewcommand{\arraystretch}{1.8}
\scalebox{.5}{
\begin{tabular}{|c|c|c|c|}
 \hline	1 & \0 & \0 & \0 \\
 \hline \0 & \0 & \0 & \0\\
 \hline \0 & \0 & \0 & \0\\
 \hline \0 & \0 & \0 & \0\\
 \hline  
\end{tabular}
\
\begin{tabular}{|c|c|c|c|}
 \hline	-1 & 1 & \0 & \0 \\
 \hline 1 & -1 & \0 & \0\\
 \hline \0 & \0 & \0 & \0\\
 \hline \0 & \0 & \0 & \0\\
 \hline  
\end{tabular}
\
\begin{tabular}{|c|c|c|c|}
 \hline	-1 & 1 & \0 & \0 \\
 \hline \0 & \0 & \0 & \0\\
 \hline 1 & -1 & \0 & \0\\
 \hline \0 & \0 & \0 & \0\\
 \hline  
\end{tabular}
\
\begin{tabular}{|c|c|c|c|}
 \hline	-1 & \0 & 1 & \0 \\
 \hline \0 & \0 & \0 & \0\\
 \hline 1 & \0 & -1 & \0\\
 \hline \0 & \0 & \0 & \0\\
 \hline  
\end{tabular}
\
\begin{tabular}{|c|c|c|c|}
 \hline	-1 & \0 & 1 & \0 \\
 \hline 1 & \0 & -1 & \0\\
 \hline \0 & \0 & \0 & \0\\
 \hline \0 & \0 & \0 & \0\\
 \hline  
\end{tabular}
}}
$$
$$
{\renewcommand{\arraystretch}{1.8}
\scalebox{.5}{
\begin{tabular}{|c|c|c|c|}
 \hline	-1 & 1 & \0 & \0 \\
 \hline \0 & \0 & \0 & \0\\
 \hline \0 & \0 & \0 & \0\\
 \hline \0 & \0 & \0 & \0\\
 \hline  
\end{tabular}
\
\begin{tabular}{|c|c|c|c|}
 \hline	-1 & \0 & 1 & \0 \\
 \hline \0 & \0 & \0 & \0\\
 \hline \0 & \0 & \0 & \0\\
 \hline \0 & \0 & \0 & \0\\
 \hline  
\end{tabular}
\
\begin{tabular}{|c|c|c|c|}
 \hline	-1 & \0 & \0 & \0 \\
 \hline 1 & \0 & \0 & \0\\
 \hline \0 & \0 & \0 & \0\\
 \hline \0 & \0 & \0 & \0\\
 \hline  
\end{tabular}
\
\begin{tabular}{|c|c|c|c|}
 \hline	-1 & \0 & \0 & \0 \\
 \hline \0 & \0 & \0 & \0\\
 \hline 1 & \0 & \0 & \0\\
 \hline \0 & \0 & \0 & \0\\
 \hline  
\end{tabular}
\
\begin{tabular}{|c|c|c|c|}
 \hline	-1 & \0 & \0 & 1 \\
 \hline \0 & \0 & \0 & \0\\
 \hline \0 & \0 & \0 & \0\\
 \hline \0 & \0 & \0 & \0\\
 \hline  
\end{tabular}
}}
$$
\caption{Generators of $S(R_n)$.}\label{fig:genSR4}
\end{figure}

We give the firing sequences proving that the order of these configurations must divide the desired orders below. We remind the reader that since we are working in the Smith group, the vertex that is fired or pulled is unchanged. Let P $= n^2-5n+5$.

\begin{figure}[H]
\begin{center}
\begin{tikzpicture}
\draw[step=0.5cm,color=black] (0,0) rectangle (4,4);
\draw[step=1cm,color=black] (0,2) grid (2,4);
\node at (.5,3.5){$-1$};
\node at (1.5,2.5){$-1$};
\node at (1.5,3.5){$1$};
\node at (.5,2.5){$1$};
\end{tikzpicture}
\caption*{$F(2\cdot c_3)$}
\end{center}
\end{figure}
\begin{figure}[H]
\begin{center}
\begin{tikzpicture}
\draw[step=0.5cm,color=black] (0,0) rectangle (4,4);
\draw[step=0.5cm,color=black] (0,3) rectangle (1,4);
\draw[step=0.5cm,color=black] (1,3) rectangle (2,4);
\draw[step=0.5cm,color=black] (0,0) rectangle (1,3);
\draw[step=0.5cm,color=black] (1,0) rectangle (2,3);
\node at (.43,3.5){{\fontsize{5}{5} $-(n-3)$}};
\node at (1.45,3.5){{\fontsize{8}{10} $n-3$}};
\node at (.5,1.5){$1$};
\node at (.5,2){$\uparrow$};
\node at (.5,1){$\downarrow$};
\node at (1.5,1.5){$-1$};
\node at (1.6,2){$\uparrow$};
\node at (1.6,1){$\downarrow$};
\end{tikzpicture}
\caption*{$F(2(n-2)\cdot c_1)$}
\end{center}
\end{figure}
\begin{figure}[H]
\begin{center}
\begin{tikzpicture}
\draw[step=0.5cm,color=black] (0,0) rectangle (4,4);
\draw[step=0.5cm,color=black] (0,3) rectangle (4,4);
\draw[step=0.5cm,color=black] (0,3) rectangle (1,4);
\draw[step=0.5cm,color=black] (0,0) rectangle (1,3);
\node at (.5,3.5){$P$};
\node at (2.5,3.5){$\leftarrow -(n-2) \rightarrow$};
\node at (.5,1.5){\begin{sideways}$\leftarrow -(n-2) \rightarrow$ \end{sideways}};
\node at (2.5,1.5){{\Large 1}};
\end{tikzpicture}
\caption*{$F(2(n-2)(n-1)\cdot c_4)$}
\end{center}
\end{figure}

\subsubsection{$\K(R_{n}^{c})$}\label{sec:KRC}\ \\
Now we will look at the critical and Smith group of the complement of the rook's graph. We'll begin with the critical group, whose claimed decomposition is $$\K(R_{n}^{c}) \cong \left(\ZZ_{n(n-2)}\right)^{(n-2)^{2}-1} \oplus \left(\ZZ_{n(n-1)(n-2)}\right)^{2} \oplus \left(\ZZ_{n^{2}(n-1)(n-2)}\right)^{2(n-2)}.$$  However, we find a much nicer proof by splitting up one component of size $n(n-1)(n-2)$ into one component of size $n(n-2)$ and one component of size $n-1$. We can do this because $n-1$ is relatively prime to both $n$ and $n-2$.   Thus we will show
\[
\K(R_{n}^{c}) \cong \left(\ZZ_{n(n-2)}\right)^{(n-2)^{2}} \oplus \ZZ_{n-1} \oplus \left(\ZZ_{n(n-1)(n-2)}\right) \oplus \left(\ZZ_{n^{2}(n-1)(n-2)}\right)^{2(n-2)}.
\]

Our main generator of order $n(n-2)$ will be $c_3$; the single generator of order $n-1$ will be $c_5$; the single generator of $n(n-1)(n-2)$ will be $c_2$; and the main generator of order $n^2(n-1)(n-2)$ will be $c_1$.

The automorphisms we apply to $c_{3}$ are the same as in the last two cases to create all $(n-2)^2$ generators. Similarly, the automorphisms applied to $c_1$ are the same as for $\K(R_n)$; this allows the $1$ to be in any square in the top row that is not the last one, and any square in the leftmost column that is not the last. The firing sequences establishing the divisibility of the orders of the configurations by our desired orders are given below. Let $Q = n^2 - n - 1$ and $U = n(n-2)$.

\begin{figure}[H]
\begin{center}
\begin{tikzpicture}
\draw[step=0.5cm,color=black] (0,0) rectangle (4,4);
\draw[step=0.5cm,color=black] (0,3) rectangle (1,4);
\draw[step=0.5cm,color=black] (1,3) rectangle (2,4);
\draw[step=0.5cm,color=black] (0,0) rectangle (1,3);
\draw[step=0.5cm,color=black] (1,0) rectangle (2,3);
\node at (.5,3.5){$-Q$};
\node at (1.5,3.5){$Q$};
\node at (1.5,1.5){$-1$};
\node at (1.6,2){$\uparrow$};
\node at (1.6,1){$\downarrow$};
\node at (.5,1.5){$1$};
\node at (.5,2){$\uparrow$};
\node at (.5,1){$\downarrow$};
\end{tikzpicture}
\caption*{$F(n^2(n-2)(n-1)\cdot c_1)$}
\end{center}
\end{figure}
\begin{figure}[H]
\begin{center}
\begin{tikzpicture}
\draw[step=0.5cm,color=black] (0,0) rectangle (4,4);
\draw[step=0.5cm,color=black] (0,0) rectangle (1,3);
\draw[step=0.5cm,color=black] (1,3) rectangle (4,4);
\node at (0.45,3.5){$-U$};
\node at (2.5,3.5){$\leftarrow (n-1) \rightarrow$};
\node at (.5,1.5){$1$};
\node at (.5,2){$\uparrow$};
\node at (.5,1){$\downarrow$};
\end{tikzpicture}
\caption*{$F(n(n-2)(n-1) \cdot c_2)$}
\end{center}
\end{figure}
\begin{figure}[H]
\begin{center}
\begin{tikzpicture}
\draw[step=0.5cm,color=black] (0,0) rectangle (4,4);
\draw[step=1cm,color=black] (0,2) grid (2,4);
\node at (.5,3.5){$-1$};
\node at (1.5,2.5){$-1$};
\node at (1.5,3.5){$1$};
\node at (.5,2.5){$1$};
\end{tikzpicture}
\caption*{$F(n(n-2) \cdot c_3)$}
\end{center}
\end{figure}

Again, the firing sequence for $c_{3}$ is itself.  The configuration $c_5$ is a somewhat special case. If $\rho$ is the reflection about the main diagonal, then $c_5 = n(n-2) \cdot \rho(c_2)$. Since $c_2$ has order dividing $n(n-2)(n-1)$, then $\rho(c_2)$ must as well. Therefore, $c_5$ must have order dividing $n-1$. 

Again, we list the generators of $\K(R_4^c)$ in Figure~\ref{fig:genKRc4} as an example.

\begin{figure}[H]
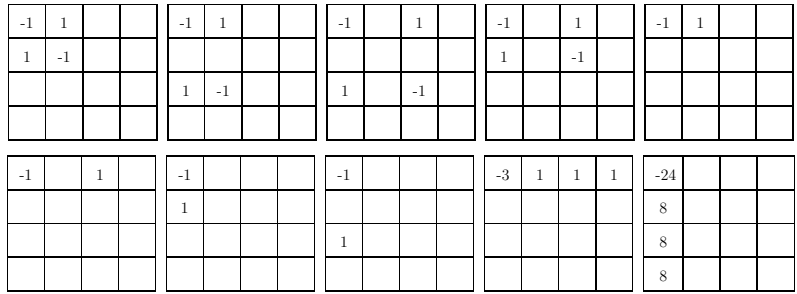

$$
{\renewcommand{\arraystretch}{1.8}
\scalebox{.5}{
\begin{tabular}{|c|c|c|c|}
 \hline	-1 & 1 & \0 & \0 \\
 \hline 1 & -1 & \0 & \0\\
 \hline \0 & \0 & \0 & \0\\
 \hline \0 & \0 & \0 & \0\\
 \hline  
\end{tabular}
\
\begin{tabular}{|c|c|c|c|}
 \hline	-1 & 1 & \0 & \0 \\
 \hline \0 & \0 & \0 & \0\\
 \hline 1 & -1 & \0 & \0\\
 \hline \0 & \0 & \0 & \0\\
 \hline  
\end{tabular}
\
\begin{tabular}{|c|c|c|c|}
 \hline	-1 & \0 & 1 & \0 \\
 \hline \0 & \0 & \0 & \0\\
 \hline 1 & \0 & -1 & \0\\
 \hline \0 & \0 & \0 & \0\\
 \hline  
\end{tabular}
\
\begin{tabular}{|c|c|c|c|}
 \hline	-1 & \0 & 1 & \0 \\
 \hline 1 & \0 & -1 & \0\\
 \hline \0 & \0 & \0 & \0\\
 \hline \0 & \0 & \0 & \0\\
 \hline  
\end{tabular}
\
\begin{tabular}{|c|c|c|c|}
 \hline	-1 & 1 & \0 & \0 \\
 \hline \0 & \0 & \0 & \0\\
 \hline \0 & \0 & \0 & \0\\
 \hline \0 & \0 & \0 & \0\\
 \hline  
\end{tabular}
}}
$$
$$
{\renewcommand{\arraystretch}{1.8}
\scalebox{.5}{
\begin{tabular}{|c|c|c|c|}
 \hline	-1 & \0 & 1 & \0 \\
 \hline \0 & \0 & \0 & \0\\
 \hline \0 & \0 & \0 & \0\\
 \hline \0 & \0 & \0 & \0\\
 \hline  
\end{tabular}
\
\begin{tabular}{|c|c|c|c|}
 \hline	-1 & \0 & \0 & \0 \\
 \hline 1 & \0 & \0 & \0\\
 \hline \0 & \0 & \0 & \0\\
 \hline \0 & \0 & \0 & \0\\
 \hline  
\end{tabular}
\
\begin{tabular}{|c|c|c|c|}
 \hline	-1 & \0 & \0 & \0 \\
 \hline \0 & \0 & \0 & \0\\
 \hline 1 & \0 & \0 & \0\\
 \hline \0 & \0 & \0 & \0\\
 \hline  
\end{tabular}
\
\begin{tabular}{|c|c|c|c|}
 \hline	-3 & 1 & 1 & 1 \\
 \hline \0 & \0 & \0 & \0\\
 \hline \0 & \0 & \0 & \0\\
 \hline \0 & \0 & \0 & \0\\
 \hline  
\end{tabular}
\
\begin{tabular}{|c|c|c|c|}
 \hline	\ -24 & \0 & \0 & \0 \\
 \hline 8 & \0 & \0 & \0\\
 \hline 8 & \0 & \0 & \0\\
 \hline 8 & \0 & \0 & \0\\
 \hline  
\end{tabular}
}}
$$
\caption{Generators of $\K(R_4^c)$.}\label{fig:genKRc4}
\end{figure}

\subsubsection{$S(R_{n}^{c})$}\ \\
The Smith group of $R_n^c$ has a very simple and nice decomposition: $$S(R_n^c) \cong (\ZZ_{n-1})^{2(n-1)} \oplus \ZZ_{(n-1)^2}.$$ We will use $c_1$ as our main generator of order $n-1$ and $c_4$ as our single generator of order $(n-1)^2$. We get $2(n-1)$ total generators from $c_1$ by moving the $1$ to be in any square in the top row or leftmost column. We'd like to note that our eigenvector $c_3$ still makes an appearance in this group, but its eigenvalue is $1$, so it's just the identity. This will be useful during our spanning arguments in the next section.

The firing sequences proving that the orders of our configurations must divide our desired order are listed below. We now let $T = (n-2)^2$.

\begin{figure}[H]
\begin{center}
\begin{tikzpicture}
\draw[step=0.5cm,color=black] (0,0) rectangle (4,4);
\draw[step=0.5cm,color=black] (0,3) rectangle (1,4);
\draw[step=0.5cm,color=black] (1,3) rectangle (2,4);
\draw[step=0.5cm,color=black] (0,0) rectangle (1,3);
\draw[step=0.5cm,color=black] (1,0) rectangle (2,3);
\node at (.5,3.5){{\tiny $n-2$}};
\node at (1.5,3.5){{\fontsize{5}{7}$-(n-2)$}};
\node at (1.5,1.5){$1$};
\node at (1.5,2){$\uparrow$};
\node at (1.5,1){$\downarrow$};
\node at (.5,1.5){$-1$};
\node at (.6,2){$\uparrow$};
\node at (.6,1){$\downarrow$};
\end{tikzpicture}
\caption*{$F((n-1)\cdot c_1)$}
\end{center}
\end{figure}
\begin{figure}[H]
\begin{center}
\begin{tikzpicture}
\draw[step=0.5cm,color=black] (0,0) rectangle (4,4);
\draw[step=0.5cm,color=black] (0,3) rectangle (4,4);
\draw[step=0.5cm,color=black] (0,3) rectangle (1,4);
\draw[step=0.5cm,color=black] (0,0) rectangle (1,3);
\node at (.5,3.5){$-T$};
\node at (2.5,3.5){$\leftarrow n-2 \rightarrow$};
\node at (.5,1.5){\begin{sideways}$\leftarrow n-2 \rightarrow$ \end{sideways}};
\node at (2.5,1.5){{\Large -1}};
\end{tikzpicture}
\caption*{$F((n-1)^2\cdot c_4)$}
\end{center}
\end{figure}

Finally, we finish this section by giving all the generators of $S(R_4^c)$ in Figure~\ref{fig:genSRc4}.
\begin{figure}[H]
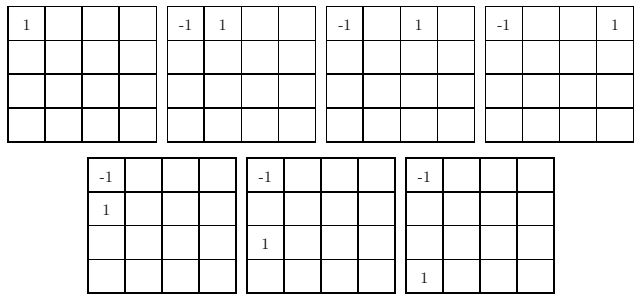

$$
{\renewcommand{\arraystretch}{1.8}
\scalebox{.5}{
\begin{tabular}{|c|c|c|c|}
 \hline	1 & \0 & \0 & \0 \\
 \hline \0 & \0 & \0 & \0\\
 \hline \0 & \0 & \0 & \0\\
 \hline \0 & \0 & \0 & \0\\
 \hline  
\end{tabular}
\
\begin{tabular}{|c|c|c|c|}
 \hline	-1 & 1 & \0 & \0 \\
 \hline \0 & \0 & \0 & \0\\
 \hline \0 & \0 & \0 & \0\\
 \hline \0 & \0 & \0 & \0\\
 \hline  
\end{tabular}
\
\begin{tabular}{|c|c|c|c|}
 \hline	-1 & \0 & 1 & \0 \\
 \hline \0 & \0 & \0 & \0\\
 \hline \0 & \0 & \0 & \0\\
 \hline \0 & \0 & \0 & \0\\
 \hline  
\end{tabular}
\
\begin{tabular}{|c|c|c|c|}
 \hline	-1 & \0 & \0 & 1 \\
 \hline \0 & \0 & \0 & \0\\
 \hline \0 & \0 & \0 & \0\\
 \hline \0 & \0 & \0 & \0\\
 \hline  
\end{tabular}
}}
$$
$$
{\renewcommand{\arraystretch}{1.8}
\scalebox{.5}{
\begin{tabular}{|c|c|c|c|}
 \hline	-1 & \0 & \0 & \0 \\
 \hline 1 & \0 & \0 & \0\\
 \hline \0 & \0 & \0 & \0\\
 \hline \0 & \0 & \0 & \0\\
 \hline  
\end{tabular}
\
\begin{tabular}{|c|c|c|c|}
 \hline	-1 & \0 & \0 & \0 \\
 \hline \0 & \0 & \0 & \0\\
 \hline 1 & \0 & \0 & \0\\
 \hline \0 & \0 & \0 & \0\\
 \hline  
\end{tabular}
\
\begin{tabular}{|c|c|c|c|}
 \hline	-1 & \0 & \0 & \0 \\
 \hline \0 & \0 & \0 & \0\\
 \hline \0 & \0 & \0 & \0\\
 \hline 1 & \0 & \0 & \0\\
 \hline  
\end{tabular}
}}
$$
\caption{Generators of $S(R_4^c)$.}\label{fig:genSRc4}
\end{figure}

The stark similarities between both the generating configurations and firing sequences for $\K(R_n), S(R_n), \K(R_n^c)$, and $S(R_n^c)$ lends to the idea that there may be some connection between each of these objects for a general graph. This connection evades the authors at this time.

\subsection{Spanning arguments}\ \\

Now we will show that our set of claimed generators for each group really do span all possible configurations.  We define the configuration
\[
q_{i,j} = \begin{cases}
-1, & \mbox{at vertex $(1,1)$}\\
1, & \mbox{at vertex  $(i,j)$}\\
0, & \mbox{elsewhere.}
\end{cases}
\]
As explained earlier, every element of the critical group of both $R_{n}$ and $R_{n}^{c}$ is represented by a configuration with vertex labels summing to zero.  Thus the set $$\{q_{i,j} \, \vert \, 1 \leq i, j \leq n \text{ with } (i,j) \neq (1,1) \}$$ is a generating set for both critical groups.  We will show that each of these $q_{i,j}$ can be expressed as a linear combination of the elements exhibited in the previous section, therefore showing that these elements do indeed form a generating set.

For the Smith groups, an obvious generating set consists of all configurations with a single vertex labeled $1$.  We will express all such configurations as linear combinations of the elements exhibited for the Smith groups in the last section.

\subsubsection{$\K(R_{n})$}\ \\
 We begin with the critical group of $R_{n}$. The configuration $c_1$ with the automorphisms previously mentioned are in fact $q_{1,j}$ and $q_{i,1}$ for $1 < i,j < n$. To get $q_{1,n}$, we just take $c_2$ and get rid of the unwanted ones using $-q_{1,j}$ for $1 < j < n$. For example, in $\K(R_4)$, we can use the following combination of generators.
$$
{\renewcommand{\arraystretch}{1.8}
\scalebox{.5}{
\begin{tabular}{|c|c|c|c| }
 \hline	1 & -1 & \0 & \0 \\
 \hline \0 & \0 & \0 & \0\\
 \hline \0 & \0 & \0 & \0\\
 \hline \0 & \0 & \0 & \0\\
 \hline  
\end{tabular}
+ 
\begin{tabular}{|c|c|c|c| }
 \hline	1 & \0 & -1 & \0 \\
 \hline \0 & \0 & \0 & \0\\
 \hline \0 & \0 & \0 & \0\\
 \hline \0 & \0 & \0 & \0\\
 \hline  
\end{tabular}
+
\begin{tabular}{|c|c|c|c| }
 \hline	-3 & 1 & 1 & 1 \\
 \hline \0 & \0 & \0 & \0\\
 \hline \0 & \0 & \0 & \0\\
 \hline \0 & \0 & \0 & \0\\
 \hline  
\end{tabular}
=
\begin{tabular}{|c|c|c|c|}
 \hline	-1 & \0 & \0 & 1 \\
 \hline \0 & \0 & \0 & \0\\
 \hline \0 & \0 & \0 & \0\\
 \hline \0 & \0 & \0 & \0\\
 \hline  
\end{tabular}
}}
$$
To get $q_{n,1}$, we need to start with the all-zero configuration and fire the top left once. Then, we get rid of the unwanted ones using $-c_2$ and $-q_{i,1}$ for $1 < i < n$. Going back to $R_4$, we do the following.

$$
{\renewcommand{\arraystretch}{1.8}
\scalebox{.5}{
\begin{tabular}{|c|c|c|c| }
 \hline	\ -6 \ & 1 & 1 & 1 \\
 \hline 1 & \0 & \0 & \0\\
 \hline 1 & \0 & \0 & \0\\
 \hline 1 & \0 & \0 & \0\\
 \hline  
\end{tabular}
+
\begin{tabular}{|c|c|c|c|}
 \hline	3 & -1 & -1 & -1 \\
 \hline \0 & \0 & \0 & \0\\
 \hline \0 & \0 & \0 & \0\\
 \hline \0 & \0 & \0 & \0\\
 \hline  
\end{tabular}
+
\begin{tabular}{|c|c|c|c|}
 \hline	1 & \0 & \0 & \0 \\
 \hline \0 & \0 & \0 & \0\\
 \hline -1 & \0 & \0 & \0\\
 \hline \0 & \0 & \0 & \0\\
 \hline  
\end{tabular}
+
\begin{tabular}{|c|c|c|c|}
 \hline	1 & \0 & \0 & \0 \\
 \hline -1 & \0 & \0 & \0\\
 \hline \0 & \0 & \0 & \0\\
 \hline \0 & \0 & \0 & \0\\
 \hline  
\end{tabular}
=
\begin{tabular}{|c|c|c|c|}
 \hline	-1 & \0 & \0 & \0 \\
 \hline \0 & \0 & \0 & \0\\
 \hline \0 & \0 & \0 & \0\\
 \hline 1 & \0 & \0 & \0\\
 \hline  
\end{tabular}
}}
$$
To get $q_{i,j}$ for $1<i,j<n$ we can just use the automorphism of $-c_3$ that puts a $1$ in spot $(i,j)$, and then get rid of the unwanted ones using $q_{1,j}$ and $q_{i,1}$. For example, we can do the following to get $q_{2,3}$ in $R_4$.
$$
{\renewcommand{\arraystretch}{1.8}
\scalebox{.5}{
\begin{tabular}{|c|c|c|c| }
 \hline	1 & \0 & -1 & \0 \\
 \hline -1 & \0 & 1 & \0\\
 \hline \0 & \0 & \0 & \0\\
 \hline \0 & \0 & \0 & \0\\
 \hline  
\end{tabular}
+
\begin{tabular}{|c|c|c|c| }
 \hline	-1 & \0 & 1 & \0 \\
 \hline \0 & \0 & \0 & \0\\
 \hline \0 & \0 & \0 & \0\\
 \hline \0 & \0 & \0 & \0\\
 \hline  
\end{tabular}
+ 
\begin{tabular}{|c|c|c|c| }
 \hline	-1 & \0 & \0 & \0 \\
 \hline 1 & \0 & \0 & \0\\
 \hline \0 & \0 & \0 & \0\\
 \hline \0 & \0 & \0 & \0\\
 \hline  
\end{tabular}
=
\begin{tabular}{|c|c|c|c|}
 \hline	-1 & \0 & \0 & \0 \\
 \hline \0 & \0 & 1 & \0\\
 \hline \0 & \0 & \0 & \0\\
 \hline \0 & \0 & \0 & \0\\
 \hline  
\end{tabular}
}}
$$
To get a $1$ in some spot in the bottom row, we can start with the all-zero configuration and fire the first spot of the column it's in and then get rid of all unwanted ones using the other $q_{i,j}$ we've built up. Analogously, if we want to get a $1$ in the rightmost column, we fire the spot in the same row in the leftmost column, and then get rid of unwanted ones.
$$
{\renewcommand{\arraystretch}{1.8}
\scalebox{.5}{
\begin{tabular}{|c|c|c|c| }
 \hline	1 & \ -6 \ & 1 & 1 \\
 \hline \0 & 1 & \0 & \0\\
 \hline \0 & 1 & \0 & \0\\
 \hline \0 & 1 & \0 & \0\\
 \hline  
\end{tabular}
+ 
\begin{tabular}{|c|c|c|c| }
 \hline	1 & \0 & -1 & \0 \\
 \hline \0 & \0 & \0 & \0\\
 \hline \0 & \0 & \0 & \0\\
 \hline \0 & \0 & \0 & \0\\
 \hline  
\end{tabular}
+
\begin{tabular}{|c|c|c|c| }
 \hline	1& \0 & \0 & -1 \\
 \hline \0 & \0 & \0 & \0\\
 \hline \0 & \0 & \0 & \0\\
 \hline \0 & \0 & \0 & \0\\
 \hline  
\end{tabular}
+
\begin{tabular}{|c|c|c|c|}
 \hline	1 & \0 & \0 & \0 \\
 \hline \0 & -1 & \0 & \0\\
 \hline \0 & \0 & \0 & \0\\
 \hline \0 & \0 & \0 & \0\\
 \hline  
\end{tabular}
+
\begin{tabular}{|c|c|c|c|}
 \hline	1 & \0 & \0 & \0 \\
 \hline \0 & \0 & \0 & \0\\
 \hline \0 & -1 & \0 & \0\\
 \hline \0 & \0 & \0 & \0\\
 \hline  
\end{tabular}
+
\begin{tabular}{|c|c|c|c|}
 \hline	-6 & 6 & \0 & \0 \\
 \hline \0 & \0 & \0 & \0\\
 \hline \0 & \0 & \0 & \0\\
 \hline \0 & \0 & \0 & \0\\
 \hline  
\end{tabular}
=
\begin{tabular}{|c|c|c|c|}
 \hline	-1 & \0 & \0 & \0 \\
 \hline \0 & \0 & \0 & \0\\
 \hline \0 & \0 & \0 & \0\\
 \hline \0 & 1 & \0 & \0\\
 \hline  
\end{tabular}
}}
$$

After obtaining $q_{i,n}$ or $q_{n,j}$ for $1 \leq i,j < n$, we can get $q_{n,n}$ using the same method.

\subsubsection{$S(R_{n})$}\ \\
For the Smith group $S(R_n)$, we must show a single $1$ can be put in any spot. The generators of $S(R_n)$ are almost exactly the same as $\K(R_n)$, so the spanning argument is almost identical. Since we are in the Smith group, however, we must worry about getting a $1$ in the top-left corner. But this comes easily from $c_4$. Instead of $c_2$, we have the automorphism of $c_1$ that puts a $1$ in spot $(1,n)$ and then we get rid of the $-1$ in the top left using $c_4$. Other than this, the argument is identical.

\subsubsection{$\K(R_{n}^{c})$}\ \\
For the critical group $\K(R_n^c)$, we again must show that we can get any $q_{i,j}$ for $(i,j) \neq (1,1)$. We can get $q_{1,j}$ for $1<j\leq n$ and $q_{i,1}$ for $1<i<n$ the same way we did for the critical group of the rook's graph; the fact that we are in the complement doesn't change anything since we never fire any vertices. Getting $q_{n,1}$ takes a little bit of work. We start with the all-zero configuration and then fire all of the vertices in the top row (except the first). Then we pull the top left $n-2$ times.
$$
{\renewcommand{\arraystretch}{1.8}
\scalebox{.5}{
\begin{tabular}{|c|c|c|c| }
 \hline	\0 & \cellcolor{red!50} & \cellcolor{red!50} & \cellcolor{red!50} \\
 \hline \0 & \0 & \0 & \0\\
 \hline \0 & \0 & \0 & \0\\
 \hline \0 & \0 & \0 & \0\\
 \hline  
\end{tabular}
$\Rightarrow$
\begin{tabular}{|c|c|c|c| }
 \hline	\cellcolor{blue!50} & \ -9 \ & \ -9 \ & \ -9 \ \\
 \hline \ 3 \ & 2 & 2 & 2\\
 \hline 3 & 2 & 2 & 2\\
 \hline 3 & 2 & 2 & 2\\
 \hline  
\end{tabular}
$\Rightarrow$
\begin{tabular}{|c|c|c|c| }
 \hline	\ 18 \ & -9 & -9 & -9 \\
 \hline 3 & \0 & \0 & \0\\
 \hline 3 & \0 & \0 & \0\\
 \hline 3 & \0 & \0 & \0\\
 \hline  
\end{tabular}
}}
$$
If we now add $(n-1)^2\cdot c_2$ to what we have, we obtain a configuration with $n-1$ in all of the spots of the leftmost column other than the top, which has $-(n-1)^2$.

$$
{\renewcommand{\arraystretch}{1.8}
\scalebox{.5}{
\begin{tabular}{|c|c|c|c| }
 \hline	\ -9 \ & \0 & \0 & \0 \\
 \hline 3 & \0 & \0 & \0\\
 \hline 3 & \0 & \0 & \0\\
 \hline 3 & \0 & \0 & \0\\
 \hline  
\end{tabular}
}}
$$
Notice that this configuration is similar to $c_5$ which has $n(n-2)$ in all of the spots of the leftmost column, except the top which has $-n(n-1)(n-2)$. Since $n-1$ and $n(n-2)$ are relatively prime, by the division algorithm, there must exist integers $a$ and $b$ such that $a(n-1)+ bn(n-2) = 1$. Letting $\rho$ be our reflection across the main diagonal again, this means that $\rho(c_2)$ is a linear combination of $c_5$ and the element above. With $\rho(c_2)$ in hand, we can get $q_{n,1}$ the same way we obtained $q_{1, n}$. Getting $q_{i,j}$ for $1<i,j<n$ can be done in much the same way as we did in the non-complement graph. We use the automorphism of $-c_3$ that puts a $1$ in spot $(i,j)$, and then get rid of the unwanted ones using $q_{1,j}$ and $q_{i,1}$.
Lastly, to get a $1$ in some spot in the bottom row not equal to $(n,1)$ or $(n,n)$, we can start with the all-zero configuration and fire the top left. Then we pull the first spot of the column it's in and get rid of all unwanted numbers using the other $q_{i,j}$ we've built up. This gives us $q_{n,j}$, $1 \leq j < n$, and we obtain $q_{i,n}$, $1 \leq i < n$, in an analogous way.
$$
{\renewcommand{\arraystretch}{1.8}
\scalebox{.5}{
\begin{tabular}{|c|c|c|c| }
 \hline	-9 & \0 & \0 & \0 \\
 \hline \0 & 1 & 1 & 1\\
 \hline \0 & 1 & 1 & 1\\
 \hline \0 & 1 & 1 & 1\\
 \hline  
\end{tabular}
+ 
\begin{tabular}{|c|c|c|c| }
 \hline	\0 & 9 & \0 & \0 \\
 \hline -1 & \0 & -1 & -1\\
 \hline -1 & \0 & -1 & -1\\
 \hline -1 & \0 & -1 & -1\\
 \hline  
\end{tabular}
+
\begin{tabular}{|c|c|c|c| }
 \hline	9 & -9 & \0 & \0 \\
 \hline \0 & \0 & \0 & \0\\
 \hline \0 & \0 & \0 & \0\\
 \hline \0 & \0 & \0 & \0\\
 \hline  
\end{tabular}
+
\begin{tabular}{|c|c|c|c|}
 \hline	\ -3 \ & \0 & \0 & \0 \\
 \hline 1 & \0 & \0 & \0\\
 \hline 1 & \0 & \0 & \0\\
 \hline 1 & \0 & \0 & \0\\
 \hline  
\end{tabular}
+
\begin{tabular}{|c|c|c|c|}
 \hline	1 & \0 & \0 & \0 \\
 \hline \0 & \0 & \0 & \0\\
 \hline \0 & -1 & \0 & \0\\
 \hline \0 & \0 & \0 & \0\\
 \hline  
\end{tabular}
+
\begin{tabular}{|c|c|c|c|}
 \hline	1 & \0 & \0 & \0 \\
 \hline \0 & -1 & \0 & \0\\
 \hline \0 & \0 & \0 & \0\\
 \hline \0 & \0 & \0 & \0\\
 \hline  
\end{tabular}
=
\begin{tabular}{|c|c|c|c|}
 \hline	-1 & \0 & \0 & \0 \\
 \hline \0 & \0 & \0 & \0\\
 \hline \0 & \0 & \0 & \0\\
 \hline \0 & 1 & \0 & \0\\
 \hline  
\end{tabular}
}}
$$
After obtaining $q_{i,n}$ or $q_{n,j}$ for $1 \leq i,j < n$, we again get $q_{n,n}$ using the same method.

\subsubsection{$S(R_{n}^{c})$}\ \\
Looking at $S(R_n^c)$, our spanning argument becomes very easy. The configuration $c_4$ and all automorphisms of $c_1$ allow us to get a single $1$ in any spot in the top row or left column. We remind the reader that $c_3$, our eigenvector, now has eigenvalue $1$, and thus $c_3$ is equal to the identity. It follows that any automorphism of $c_3$ will be the identity as well. So we can put a $1$ in any spot in the bottom right $(n-1) \times (n-1)$ square by applying some automorphism to $-c_3$ that fixes the top left spot, and then getting rid of the unwanted ones and negative ones in the top row and leftmost column.

\section{Acknowledgements}
This work was supported by the James Madison University's Tickle Fund.

\bibliographystyle{plain}
\bibliography{rook}

\begin{thebibliography}{10}

\bibitem{berget}
Andrew Berget.
\newblock Critical groups of some regular line graphs.
\newblock {\em Accessible at
  http://www.math.umn.edu/~reiner/REU/BergetReport.ps}, 1991.

\bibitem{biggs}
N.~L. Biggs.
\newblock Chip-firing and the critical group of a graph.
\newblock {\em J. Algebraic Combin.}, 9(1):25--45, 1999.

\bibitem{b-ve}
A.~E. Brouwer and C.~A. van Eijl.
\newblock On the {$p$}-rank of the adjacency matrices of strongly regular
  graphs.
\newblock {\em J. Algebraic Combin.}, 1(4):329--346, 1992.

\bibitem{b-h}
Andries~E. Brouwer and Willem~H. Haemers.
\newblock {\em Spectra of graphs}.
\newblock Universitext. Springer, New York, 2012.

\bibitem{dino2}
Dino Lorenzini.
\newblock Smith normal form and {L}aplacians.
\newblock {\em J. Combin. Theory Ser. B}, 98(6):1271--1300, 2008.

\bibitem{dino1}
Dino~J. Lorenzini.
\newblock A finite group attached to the {L}aplacian of a graph.
\newblock {\em Discrete Math.}, 91(3):277--282, 1991.

\bibitem{moon}
J.~W. Moon.
\newblock On the line-graph of the complete bigraph.
\newblock {\em Ann. Math. Statist.}, 34:664--667, 1963.

\bibitem{rushanan}
Joseph~John Rushanan.
\newblock {\em Topics in integral matrices and abelian group codes}.
\newblock ProQuest LLC, Ann Arbor, MI, 1986.
\newblock Thesis (Ph.D.)--California Institute of Technology.

\bibitem{shrikhande}
S.~S. Shrikhande.
\newblock The uniqueness of the {$L_{2}$} association scheme.
\newblock {\em Ann. Math. Statist.}, 30:781--798, 1959.

\bibitem{sin}
Peter Sin.
\newblock Smith normal forms of incidence matrices.
\newblock {\em Sci. China Math.}, 56(7):1359--1371, 2013.

\bibitem{smith2}
Henry J.~Stephen Smith.
\newblock Arithmetical {N}otes.
\newblock {\em Proc. London Math. Soc.}, S1-4(1):236, 1873.

\bibitem{ve}
C.~A. van Eijl.
\newblock On the {$p$}-rank of the adjacency matrices of strongly regular
  graphs.
\newblock {\em MSc. thesis, Techn. Univ. Eindhoven}, 1991.

\bibitem{wilson}
Richard~M. Wilson.
\newblock A diagonal form for the incidence matrices of {$t$}-subsets vs.\
  {$k$}-subsets.
\newblock {\em European J. Combin.}, 11(6):609--615, 1990.

\end{thebibliography}

\end{document}